 \newtheorem{thm}{Theorem}
 \newtheorem{quest}[thm]{Question}
 \newtheorem{exm}[thm]{Example}
 \newtheorem{dfn}[thm]{Definition}
\begin{document}
\title{One Parameter Semigroups in Two Complex Variables.}
 
\author{Michael R. Pilla\\ Indiana University, Bloomington\\mpilla@iu.edu\\{\it AMS Subject Classification}: 32A10, 32H50}

\maketitle 

\begin{abstract}
For self maps of the disk, it can be shown that under the right conditions one can embed a discrete iteration of the map into a continuous semigroup. In this article we extend these results to two complex variables for maps of the unit ball into itself under some restricted conditions.
\end{abstract}

\section{Introduction}

One parameter semigroups of analytic self maps of the disk are a topic of interest in complex analysis in that, when they exist, they allow the embedding of a discrete iteration of the analytic map into a time-continuous analogue. In one complex variable, of course, the Riemann Mapping Theorem tells us that the study of such semigroups on the unit disk is sufficient. In this case, such objects can be ``linearized" analytically and viewed as continuous semigroups of linear fractional maps \cite{abate}, \cite{berkson}. One may generalize this study of semigroups to the unit ball $B_N=\{(z_1,...,z_N) \in \mathbb{C}^N \mid \sum_{i=1}^N|z_i|^2<1\}$ in $\mathbb{C}^N$. In several variables, the problem is necessarily more complicated, although some properties are known \cite{defab}.  In this article, we will focus on explicit constructions of one parameter semigroups for analytic self maps of the unit ball $\mathbb{B}_2$ using a model theory of linear fractional maps described by Cowen in \cite{cowen}.

Consider the set 

$$T=\{\phi : B_N \rightarrow B_N \mid \phi \hspace{1.5mm} \text{is a nonconstant analytic map, not an automorphism}\}.$$ 

For $\phi \in T$, let $\phi_n$ represent the $n^{\text{th}}$ iteration of $\phi$. It is clear that the set of iterates $\{\phi_n\}$ under composition for $n=0,1,2,...,$ defines a discrete semigroup. 

Recall that a \textbf{one parameter semigroup} for a monoid $(S, *)$ is a map $\phi: [0, \infty) \rightarrow S$, such that 

 \begin{enumerate}[label=\roman*.]
   \item $\phi(0)=I$.
   \item $\phi(s+t)=\phi(s) \ast \phi(t)$.
 \end{enumerate}

For a full treatment of one parameter semigroups, see, for example, \cite{engel}. 

The strategy will be to use a model theory of linear fractional maps to transport the iterates of our analytic maps to a set of model linear fractional maps. It can be shown that a nonconstant analytic self map $\phi$ of the disk, not an automorphism, can be intertwined with a linear fractional map $\Phi$ and an analytic map $\sigma$ such that

\begin{equation*}
\Phi \circ \sigma=\sigma \circ \phi
\end{equation*}

\noindent where $\sigma$ maps the disk into a domain $\Omega$, which we call the characteristic domain, with $\Phi$ mapping $\Omega$ onto $\Omega$ \cite{cowen}. We have the following commutative diagram:

$$\begin{tikzcd}
D \arrow[r, "\phi"] \arrow[d, "\sigma"]
& D \arrow[d, "\sigma"] \\
\Omega \arrow[r, "\Phi" ]
& \Omega
\end{tikzcd}$$

If $\Omega$ is the smallest set containing $\sigma(D)$ for which $\Phi(\Omega)=\Omega$, then the model parameters $(\sigma, \Omega, \Phi)$ will be unique up to holomorphic equivalence. The classification depends heavily on the behavior near the Denjoy-Wolff point $a$. A key fact we note is that $\sigma \circ \phi = \Phi \circ \sigma$ implies $\sigma \circ \phi_n = \Phi_n \circ \sigma$ for $\Phi$ a linear fractional map. Under the right conditions imposed on $\phi$, one can use this model theory to extend the discrete semigroup $\{\phi_n\}$ to a one parameter semigroup in one complex variable \cite{cowen}. We aim to generalize this result to two complex variables.

\subsection{Linear Fractional Maps in $\mathbb{C}^N$}

In order to obtain one parameter semigroups of analytic maps in several variables using linear fractional model maps, we must generalize the class of linear fractional maps to higher dimensions. We will take the perspective that the associated matrices of our linear fractional maps should act as linear transformations on complex projective coordinates \cite{pilla}. Using this perspective brings us to the following definition.

\begin{dfn}
We say $\phi$ is a linear fractional map in $\mathbb{C}^N$ if 

\begin{equation*}\label{LFM}
\phi(z)=\frac{Az+B}{\langle z,C \rangle+D}
\end{equation*}

\noindent where $A$ is an $N \times N$ matrix, $B$ and $C$ are column vectors in $\mathbb{C}^N$, $D \in \mathbb{C}$, and $\langle \cdot, \cdot \rangle$ is the standard inner product.
\end{dfn}

With this definition, we recover a variety of properties that the class of linear fractional maps enjoy in one variable  \cite{lfm}. 

We also define the associated matrix to our linear fractional maps:

\begin{dfn}
The associated matrix $m_{\phi}$ of the linear fractional map $\phi(z)=\frac{Az+B}{\langle z,C \rangle+D}$ is given by

\begin{equation*}
    m_{\phi} = \begin{pmatrix}
        A & B \\
        C^* & D
     \end{pmatrix}.
\end{equation*}
\end{dfn}

It's not hard to show that $m_{\phi_1 \circ \phi_2}=m_{\phi_1}m_{\phi_2}$ and $m_{\phi^{-1}}=(m_{\phi})^{-1}$ from which we see $m_{\phi_n}=(m_{\phi})^n$. 

The utility of the associated matrix is that it allows us to convert iteration into multiplication, allowing a clearer view of how to embed our discrete semigroup $\{\phi_n\}$ into a continuous one.

\subsection{A Model Theory in Two Complex Variables}

In the case where $\phi$ is an analytic self map of the unit ball $\mathbb{B}_N$ with no interior fixed points, MacCluer demonstrated the existence of a unique fixed point on the boundary such that the iterates of $\phi$ converge uniformly to $a$ on compact subsets of $\mathbb{B}_N$ \cite{maccluer}. As in the disk, we call this priviliged point the Denjoy-Wolff point. Likewise, our classification in $\mathbb{B}_2$ will depend on this point.

Due to its critical use of the Riemann Mapping Theorem, the model theory in the disk cannot be generalized to $\mathbb{B}_N$ in it's full generality. One can show, however, that a classification can be obtained for the class of linear fractional maps in two complex variables \cite{crosby}. As in the disk, one can show that this classification is invariant under conjugation by an automorphism. The seven cases obtained are determined by the behavior of the map $\phi$ near the Denjoy-Wolff point and its characteristic domain. The cases depend on whether the Denjoy-Wolff point is an interior fixed point or whether it is on the boundary. They also depend on the multiplicity of the Denjoy-Wolff point and its associated characteristic domain. We find that there are three characteristic domains to be considered. These are the whole space $\mathbb{C}^2$, the half space $\mathbb{H}=\{(z_1, z_2) \in \mathbb{C}^2 \mid \Re z_1 > 0\}$, and the Siegel half space $\mathbb{H}^2=\{(z_1, z_2) \in \mathbb{C}^2 \mid \Re z_1 > |z_2|^2 \}$. We reproduce the results of \cite{crosby} here for convenience.

\begin{thm}[The Model for Iteration of Linear Fractional Maps]\label{data}
Let $\phi$ be a linear fractional map of $\mathbb{B}_2$ into itself, not an automorphism of the ball and not constant. We can intertwine $\phi$ with a model linear fractional map $\Phi$ with characteristic domain $\Omega$, either the half space, Siegel half space, or the whole space, and an open map $\sigma$ from $\mathbb{B}_2$ into $\Omega$ such that 

\begin{equation*}
\sigma \circ \phi=\Phi \circ \sigma.
\end{equation*}

If $\Omega$ is the smallest set containing $\sigma(\mathbb{B}_2)$ for which $\Phi(\Omega)=\Omega$, then the model parameters $(\sigma, \Omega, \Phi)$ will be unique up to holomorphic equivalence. 
\end{thm}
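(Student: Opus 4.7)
The plan is to reduce to a finite list of normal forms by exploiting automorphisms of $\mathbb{B}_2$ and the linear structure of the associated matrix $m_\phi$. First I would invoke MacCluer's boundary Denjoy-Wolff theorem together with the standard interior fixed point argument to locate the distinguished point $a$ to which the iterates converge. The construction splits cleanly into two regimes: (A) $a$ is an interior fixed point of $\mathbb{B}_2$, and (B) $a$ lies on $\partial \mathbb{B}_2$. Because conjugation by a ball automorphism preserves both the class of linear fractional maps and the model relation $\sigma \circ \phi = \Phi \circ \sigma$ (just compose $\sigma$ with the automorphism), I may assume $a = 0$ in case (A) and $a = (1,0)$ in case (B).

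In case (A), the linear fractional condition forces $\phi$ to be of the form $\phi(z) = (Az)/(\langle z,C\rangle + D)$ with $\phi(0)=0$, so the first-order behavior is governed by $A/D$. Passing to the Jordan form of $A/D$, I split into the diagonalizable subcase and the single nontrivial Jordan block subcase. In the diagonalizable subcase I would take $\Omega = \mathbb{C}^2$, $\Phi(w_1,w_2) = (\lambda_1 w_1, \lambda_2 w_2)$, and build $\sigma$ as the Koenigs-type linearizer along each eigendirection; in the Jordan subcase $\Phi$ gains a shear term but still acts on $\mathbb{C}^2$. In case (B) I first apply the Cayley transform $\mathcal{C}: \mathbb{B}_2 \to \mathbb{H}^2$ sending $(1,0) \mapsto \infty$; the map $\widetilde{\phi} = \mathcal{C} \circ \phi \circ \mathcal{C}^{-1}$ is then a linear fractional self-map of $\mathbb{H}^2$ with $\infty$ as its Denjoy-Wolff point, which makes its associated matrix upper-block triangular in the projective coordinates. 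The eigenvalues and Jordan block structure of $m_{\widetilde{\phi}}$ at the distinguished fixed point then select between $\mathbb{H}^2$ itself and the simpler half space $\mathbb{H}$, and give the explicit scalar/translation/shear normal form for $\Phi$.

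For each normal form, I would define $\sigma$ explicitly as the composition of the relevant Cayley transform with a linear or polynomial change of coordinates diagonalizing (or Jordan-izing) the projective action, and then verify the intertwining identity $\sigma \circ \phi = \Phi \circ \sigma$ by a direct matrix calculation using the multiplicativity $m_{\phi_1 \circ \phi_2} = m_{\phi_1} m_{\phi_2}$ recorded in the excerpt. That $\sigma$ is open follows in each case because it is a composition of an automorphism (or Cayley transform, biholomorphic onto its image) with a surjective linear or polynomial map. Enumerating the resulting triples $(\sigma,\Omega,\Phi)$ recovers the seven cases of \cite{crosby}.

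For uniqueness up to holomorphic equivalence, suppose $(\sigma_1,\Omega_1,\Phi_1)$ and $(\sigma_2,\Omega_2,\Phi_2)$ both intertwine $\phi$ with $\Omega_i$ the smallest $\Phi_i$-invariant set containing $\sigma_i(\mathbb{B}_2)$. The natural map $\tau: \sigma_1(z) \mapsto \sigma_2(z)$ is well-defined on the forward orbit generated by $\sigma_1(\mathbb{B}_2)$ because both $\sigma_i$ satisfy the same functional equation, and the minimality hypothesis forces its domain and range to exhaust $\Omega_1$ and $\Omega_2$; extending by the intertwining across the $\Phi_i$-iterates and invoking the identity principle on the open sets $\sigma_i(\mathbb{B}_2)$ gives a biholomorphism $\tau: \Omega_1 \to \Omega_2$ with $\tau \circ \Phi_1 = \Phi_2 \circ \tau$. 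The main obstacle throughout is the case analysis in (B): one has to correctly pair each Jordan structure of $m_{\widetilde{\phi}}$ at $\infty$ with the correct characteristic domain ($\mathbb{H}$ versus $\mathbb{H}^2$), since a careless choice of $\Omega$ either fails to be $\Phi$-invariant or fails minimality, and this is where the geometry of the Siegel domain's boundary behavior genuinely enters.
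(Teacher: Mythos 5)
The paper does not actually prove this theorem: it is quoted verbatim from \cite{crosby} (``We reproduce the results of \cite{crosby} here for convenience''), so there is no in-paper argument to measure you against. Your outline does track the strategy used in the literature --- locate the Denjoy--Wolff point via MacCluer, normalize by ball automorphisms, pass to the Siegel half space by the Cayley transform in the boundary case, and read the normal form of $\Phi$ off the Jordan structure of the associated matrix. But as a proof it has concrete gaps. The central one is the existence of $\sigma$. For linear fractional $\phi$ the honest route is algebraic: write $m_\phi = S\Lambda S^{-1}$ and take $\sigma$ to be the linear fractional map with associated matrix $S^{-1}$, so that the intertwining $\sigma\circ\phi = \Phi\circ\sigma$ is automatic from $m_{\phi_1\circ\phi_2}=m_{\phi_1}m_{\phi_2}$. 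What then remains --- and what is the actual content of the theorem --- is showing that this $\sigma$ maps $\mathbb{B}_2$ \emph{into} the claimed $\Omega$, that $\Phi$ restricts to an automorphism of $\Omega$, and that $\Omega$ is generated from $\sigma(\mathbb{B}_2)$ by backward iteration. You explicitly flag this as ``the main obstacle'' and ``where the geometry of the Siegel domain's boundary behavior genuinely enters,'' but you never resolve it; the sketch identifies the hard step and stops there. Relatedly, in the interior fixed point case the map $\phi(z)=Az/(\langle z,C\rangle+D)$ is not linear, so ``Koenigs along each eigendirection'' is not immediate: in two variables linearization is obstructed by resonances among the eigenvalues of $d\phi_0$, and you do not address why these do not force extra monomial terms into $\Phi$. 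Your justification that $\sigma$ is open because it is a ``surjective linear or polynomial map'' is also too quick: polynomial maps of $\mathbb{C}^2$ need not be open (consider $(z_1,z_2)\mapsto(z_1,z_1z_2)$); one needs a nonvanishing Jacobian or a case-by-case check.

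The uniqueness argument has a second genuine gap. You define $\tau$ by $\sigma_1(z)\mapsto\sigma_2(z)$ and assert it is well defined ``because both $\sigma_i$ satisfy the same functional equation.'' That is not sufficient: well-definedness requires that $\sigma_1(z)=\sigma_1(w)$ imply $\sigma_2(z)=\sigma_2(w)$, which the functional equation alone does not give. The standard repair --- and the reason the fundamental set $V$ appears in the statement immediately following the theorem in this paper --- is to use that $\sigma_1$ is univalent on $V$, define $\tau=\sigma_2\circ\sigma_1^{-1}$ on the open set $\sigma_1(V)$, and then propagate it to all of $\Omega_1$ by $\tau=\Phi_2^{-n}\circ\tau\circ(\Phi_1)_n$, checking consistency via the identity principle. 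Your sketch gestures at this extension but omits the univalence input that makes the base case legitimate. In short: the architecture is right and matches \cite{crosby}, but the two load-bearing steps (that $\sigma$ lands in $\Omega$ with $\Phi$ an automorphism of $\Omega$, and that $\tau$ is well defined) are precisely the ones left unproved.
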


In addition, there exists a set $V$, known as the fundamental set, such that $V$ is an open, connected, simply connected subset of $\mathbb{B}_2$ such that $\phi(V) \subset V$ and for every compact set $K$ in $\mathbb{B}_2$, there is a positive integer $n$ with  $\phi_n (V) \subset V$ with $\phi$ and $\sigma$ univalent on $V$ and with $\sigma(V)$ a fundamental set for $\Phi$ on $\Omega$.

We also consider maps that are analytic, not necessarily linear fractional, that also have this model for iteration. Since all analytic self maps of the disk, not automorphisms, have such a model, one's intuition is that analytic maps with a model for iteration share more common behavior with analytic self maps of the disk than generic self maps of the unit ball. 

\section{Constructing One Parameter Semigroups in Two Complex Variables}

We began by considering one parameter semigroups for the class of linear fractional maps from the unit ball into itself in two complex variables. Richman demonstrated a criteria to determine when a linear fractional map maps $\mathbb{B}_N$ into itself \cite{rich}. In addition to the model theory, one can classify our class of linear fractional maps according to what is known as the boundary dilation coefficient \cite{bisi}. Semigroups have been classified, up to conjugation, in several complex variables using this coefficient  \cite{bracci}. In this section, we explicitly construct the semigroups based on the classification using the model theory of linear fractional maps.  Recall that $\sigma \circ \phi = \Phi \circ \sigma$ implies $\sigma \circ \phi_n = \Phi_n \circ \sigma$. In the case where $\phi$ is a linear fractional map, it is sufficient to take $\sigma$ to be an invertible linear fractional map and we may write

\begin{equation}
\phi=\sigma^{-1} \circ \Phi \circ \sigma.
\end{equation}

We can thus define a discrete semigroup for the set $\{\phi_n\}$ by

\begin{equation}
\phi_n=\sigma^{-1} \circ \Phi_n \circ \sigma
\end{equation}

\noindent where $\Phi_n$ is our model linear fractional map. It's not hard to see that the eigenvectors of the associated matrix $m_{\phi}$ correspond to fixed points of $\phi$.  We assume our maps are invertible and thus we do not have to consider zero eigenvalues. We will make heavy use of Jordan form to attain our goals. We factor the associated matrix $m_{\phi}$ to obtain $m_{\phi}=S\Lambda S^{-1}$ where the columns of $S$ are (generalized) eigenvectors of $m_{\phi}$ and $\Lambda$ is in Jordan form. Given a linear fractional map  $\phi$ of $\mathbb{B}_N$ into itself and an automorphism $\psi$ of $\mathbb{B}_N$, we see that

$$m_{\psi}m_{\phi}m_{\psi}^{-1}=m_{\psi}S \Lambda S^{-1}m_{\psi}^{-1}=(m_{\psi}S) \Lambda (m_{\psi}S)^{-1}.$$

Thus, not only are our maps are equivalent up to conjugation by an automorphism, but conjugation by an automorphism yields the same Jordan form matrix $\Lambda$. 

Letting $\phi_0$ be the identity, we see that for $n=0,1,2, \dots$, the iterates are given by 

$$m_{\phi_n}=(m_{\phi})^n=\left(S \Lambda S^{-1} \right)^n=S \Lambda^nS^{-1}.$$

Our goal then is to extend this definition by finding an expression for $\Lambda^n$ and replacing $n=0,1,2, \dots,$ with $t \in [0, \infty)$. The form of $\Lambda$ will depend on which of the seven cases we are in. Since the cases where we have Denjoy-Wolff point without multiplicity three can be written as direct sums of lower dimensional associated matrices, our primary concern will consist of defining a one parameter semigroup for cases where we have multiplicity three. Nonetheless, we will demonstrate our construction for all of the cases with Denjoy-Wolff point on the boundary.

In order to define a one parameter semigroup, we must be sure that our map stays in our space for fractional iterates and, more generally, for $\phi_t$ when $t \in [0, \infty)$  with $t \notin \mathbb{N} \cup \{0\}$. Recall that our characteristic domains consist of the whole space, the half space, and the Siegel half space. These are convex domains in $\mathbb{C}^2$. We make the following straightforward calculation.

Given two vectors $(u_1, u_2)$  and $(w_1,w_2)$ in the half space, we have for $t \in [0,1]$

$$\Re (tu_1+(1-t)w_1)=t\Re u_1+(1-t) \Re w_1 >0$$

\noindent and thus the half space is convex.

Given two vectors $(u_1, u_2)$  and $(w_1,w_2)$ in the Siegel half space, we have for $t \in [0,1]$

\begin{align*}
\Re (tu_1+(1-t)w_1)&=t\Re u_1+(1-t) \Re w_1 >t|u_2|^2+(1-t)|w_1|^2 \\
 &\geq t^2|u_2|^2+(1-t)^2|w_1|^2 \geq |t u_2 +(1-t)w_2|^2
\end{align*}

\noindent and thus the Siegel half space is convex. 

It is clear that, since in each case $\Lambda$ is in Jordan form (taken so that the off-diagonal elements are ones on the subdiagonal above the diagonal), each of the model maps $\Lambda$ are associated with a map of the form $Az+B$.

In cases where we have three distinct fixed points, $m_{\phi}$ is diagonalizable. Suppose 

$$\Lambda=\begin{pmatrix}
        \lambda_1 & 0 & 0 \\
        0 & \lambda_2 & 0 \\
        0 & 0 & \lambda_3
     \end{pmatrix}.$$

For $n$, a nonnegative integer, we have $m_{\phi_n}=(m_{\phi})^n=S\Lambda^n S^{-1}$ where

$$\Lambda^n=\begin{pmatrix}
        \lambda_1^n & 0 & 0 \\
        0 & \lambda_2^n & 0 \\
        0 & 0 & \lambda_3^n
     \end{pmatrix}.$$

We may embed this in a continuous semigroup defined by $m_{\phi_t}=S\Lambda^t S^{-1}$ for all $t \ge 0$ where 

$$\Lambda^t=\begin{pmatrix}
        \lambda_1^t & 0 & 0 \\
        0 & \lambda_2^t & 0 \\
        0 & 0 & \lambda_3^t
     \end{pmatrix}.$$

We then have for $s$, $t \ge 0$, 

$$m_{\phi_t}m_{\phi_s}=S\Lambda^t S^{-1}S\Lambda^sS^{-1}=S\Lambda^{t+s}S^{-1}=m_{\phi_{t+s}}$$

\noindent from which it follows that $\phi_{t \circ s}=\phi_{t+s}$. 

In the cases of multiplicity two, we note that $\Lambda$ is given by

$$\Lambda=\begin{pmatrix}
        \lambda & 1 & 0 \\
        0 & \lambda &  0 \\
        0 & 0 & \eta
     \end{pmatrix}$$

\noindent where $\lambda$ may equal $\eta$ depending on our characteristic domain. Hence we find

$$m_{\phi_n}=S\Lambda^n S^{-1}=S (A^n \oplus B^n) S^{-1}$$

\noindent with 

\begin{equation}
A^n=\begin{pmatrix}
        \lambda^n & n\lambda^{n-1}  \\
        0 & \lambda^n
     \end{pmatrix}
   \quad\mathrm{and}\quad 
B^n=(\eta^n).
\end{equation}

Hence we can embed this into a continuous semigroup defined for all $t \ge 0$ by

$$\Lambda=\begin{pmatrix}
        \lambda^t & t\lambda^{t-1} & 0 \\
        0 & \lambda^t &  0 \\
        0 & 0 & \eta^t
     \end{pmatrix}.$$

We check this satisfies the one parameter semigroup properties by noting

$$\Lambda^t \Lambda^s=\begin{pmatrix}
        \lambda^t & t\lambda^{t-1} & 0 \\
        0 & \lambda^t &  0 \\
        0 & 0 & \eta^t
     \end{pmatrix}
\begin{pmatrix}
        \lambda^s & s\lambda^{s-1} & 0 \\
        0 & \lambda^s &  0 \\
        0 & 0 & \eta^s
     \end{pmatrix}
=\begin{pmatrix}
        \lambda^{t+s} & (t+s)\lambda^{t+s-1} & 0\\
        0 & \lambda^{t+s} & 0 \\
        0 & 0 & \eta^{t+s}
     \end{pmatrix}=\Lambda^{t+s}$$

\noindent and thus

$$m_{\phi_t}m_{\phi_s}=S\Lambda^t S^{-1}S\Lambda^sS^{-1}=S\Lambda^{t+s}S^{-1}=m_{\phi_{t+s}}$$

\noindent from which it follows that $\phi_{t \circ s}=\phi_{t+s}$. 

In the cases where we have one fixed point of multiplicity three, $\Lambda$ has the form 

$$\Lambda=\begin{pmatrix}
        \alpha & 1& 0 \\
        0 & \alpha & 1 \\
        0 & 0 & \alpha
     \end{pmatrix}.$$

We may assume that the diagonal elements are all $1$ as, for $\lambda=\frac{1}{\alpha}$, we have

$$\Lambda=\begin{pmatrix}
        \alpha & 1& 0 \\
        0 & \alpha & 1 \\
        0 & 0 & \alpha
     \end{pmatrix}
=\alpha\begin{pmatrix}
        1 & \lambda & 0 \\
        0 & 1 & \lambda \\
        0 & 0 & 1
     \end{pmatrix}$$

\noindent and any multiple of an associated vector in $\mathbb{C}^{N+1}$ is associated with the same vector in $\mathbb{C}^N$. Thus, without loss of generality, 

$$\Lambda=\begin{pmatrix}
        1 & \lambda & 0 \\
        0 & 1 & \lambda \\
        0 & 0 & 1
     \end{pmatrix}.$$

We then recall a straightforward result that we reproduce for our specific case:

\begin{thm} For $n$, a nonnegative integer, we have $m_{\phi_n}=(m_{\phi})^n=S\Lambda^n S^{-1}$ where

$$\Lambda^n=\begin{pmatrix}
        1 & \lambda n & \frac{\lambda^2n(n-1)}{2} \\
        0 & 1 & \lambda n \\
        0 & 0 & 1
     \end{pmatrix}.$$

\end{thm}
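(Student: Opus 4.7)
The plan is to write $\Lambda$ as $I + N$, where $N = \Lambda - I$ is the strictly upper triangular matrix with $\lambda$'s on the superdiagonal and zeros elsewhere. Since $I$ commutes with every matrix, the binomial theorem applies:
$$\Lambda^n = (I+N)^n = \sum_{k=0}^{n}\binom{n}{k}N^k.$$
A direct multiplication shows that $N^2$ has a single nonzero entry, namely $\lambda^2$ in the $(1,3)$ position, and $N^3 = 0$ (this is the standard fact that a nilpotent Jordan block of size three has nilpotency index three). The sum therefore truncates after three terms, giving
$$\Lambda^n = I + nN + \binom{n}{2}N^2.$$

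Substituting the explicit forms of $N$ and $N^2$ and using $\binom{n}{2}=\tfrac{n(n-1)}{2}$ produces the claimed matrix. There is no real obstacle here: the only step requiring any care is the computation of the powers of $N$, which is immediate from the block structure. As an alternative, one could argue by induction on $n$, in which case the inductive step reduces to checking that the $(1,3)$ entry of $\Lambda \cdot \Lambda^n$ equals $\tfrac{\lambda^2(n+1)n}{2}$; this follows from the Pascal-type identity $\binom{n}{2} + n = \binom{n+1}{2}$, while the other entries match trivially. I would prefer the binomial argument since it makes transparent why the formula in the theorem then extends naturally to real $t$ by replacing $\binom{n}{2}$ with $\tfrac{t(t-1)}{2}$, a continuation needed in the subsequent construction of the one parameter semigroup.
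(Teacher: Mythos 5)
Your proof is correct, but it takes a different route from the paper, which argues by induction on $n$: the base case $n=1$ is immediate, and the inductive step multiplies $\Lambda^k$ by $\Lambda$ and simplifies the $(1,3)$ entry via $\lambda^2 k + \tfrac{\lambda^2 k(k-1)}{2} = \tfrac{\lambda^2(k+1)k}{2}$, which is exactly the Pascal-type identity you mention as your fallback. Your primary argument instead decomposes $\Lambda = I + N$ with $N$ nilpotent of index three and invokes the binomial theorem, so that $\Lambda^n = I + nN + \binom{n}{2}N^2$ with the sum truncating; this is valid since $I$ commutes with $N$, and the entries of $N$ and $N^2$ are as you state. The two approaches verify the same identity, but yours has the advantage of explaining \emph{where} the coefficients $n$ and $\tfrac{n(n-1)}{2}$ come from and of suggesting the natural polynomial continuation $I + tN + \tfrac{t(t-1)}{2}N^2$ used later for the one parameter semigroup, whereas the paper's induction is more elementary and self-contained but leaves the formula looking like a lucky guess. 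One small caveat: the binomial theorem by itself only justifies the formula for integer $n$; the extension to real $t$ still has to be checked directly against the semigroup law $\Lambda^t\Lambda^s = \Lambda^{t+s}$, which the paper does separately, so your remark about the continuation should be read as motivation rather than as a proof of the $t$-parameter statement.
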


\begin{proof}
We use proof by induction. Certainly our result is true for $n=1$. Now suppose that it is true for $n=k$. Then

\begin{align*}
\Lambda^{k+1}&=\Lambda^k\Lambda=\begin{pmatrix}
        1 & \lambda k & \frac{\lambda^2k(k-1)}{2} \\
        0 & 1 & \lambda k \\
        0 & 0 & 1
     \end{pmatrix}\begin{pmatrix}
        1 & \lambda & 0 \\
        0 & 1 & \lambda \\
        0 & 0 & 1
     \end{pmatrix}
=\begin{pmatrix}
        1 & \lambda+\lambda k & \lambda^2k+\frac{\lambda^2k(k-1)}{2} \\
        0 & 1 & \lambda+\lambda k \\
        0 & 0 & 1
     \end{pmatrix}\\
&=\begin{pmatrix}
        1 & \lambda(k+1) & \frac{\lambda^2}{2}\left(2k+k(k-1)\right) \\
        0 & 1 & \lambda(k+1)  \\
        0 & 0 & 1
     \end{pmatrix}=\begin{pmatrix}
        1 & \lambda(k+1) & \frac{\lambda^2(k+1)((k+1)-1)}{2} \\
        0 & 1 & \lambda(k+1)  \\
        0 & 0 & 1
     \end{pmatrix}
\end{align*}

and thus our result is true for $n=k+1$.

\end{proof}

We may embed this in a continuous semigroup defined by $m_{\phi_t}=S\Lambda^t S^{-1}$ for all $t \ge 0$ where 

$$\Lambda^t=\begin{pmatrix}
        1 & \lambda t & \frac{\lambda^2t(t-1)}{2} \\
        0 & 1 & \lambda t \\
        0 & 0 & 1
     \end{pmatrix}.$$

We check this satisfies the one parameter semigroup properties by noting

\begin{align*}
\Lambda^t \Lambda^s&=\begin{pmatrix}
        1 & \lambda t & \frac{\lambda^2t(t-1)}{2} \\
        0 & 1 & \lambda t \\
        0 & 0 & 1
     \end{pmatrix}
\begin{pmatrix}
        1 & \lambda s & \frac{\lambda^2s(s-1)}{2} \\
        0 & 1 & \lambda s \\
        0 & 0 & 1
     \end{pmatrix}\\
&=\begin{pmatrix}
        1 & \lambda(t+s) & \frac{\lambda^2(t+s)(t+s-1)}{2} \\
        0 & 1 & \lambda(t+s) \\
        0 & 0 & 1
     \end{pmatrix}=\Lambda^{t+s}
\end{align*}

\noindent and thus

$$m_{\phi_t}m_{\phi_s}=S\Lambda^t S^{-1}S\Lambda^sS^{-1}=S\Lambda^{t+s}S^{-1}=m_{\phi_{t+s}}$$

\noindent from which it follows that $\phi_{t \circ s}=\phi_{t+s}$.

\begin{exm}  \normalfont
Let $\phi$ be the linear fractional map from $\mathbb{B}_2$ into $\mathbb{B}_2$ given by

$$\phi(z)=\left(\frac{z_1+2z_2+1}{-z_1+2z_2+3}, \frac{-2z_1+2z_2+2}{-z_1+2z_2+3} \right).$$

Note that $\phi(1,0)=(1,0)$ with multiplicity three. We have 

$$\phi(z)=\frac{Az+B}{C^*z+D}$$

\noindent where $A=\begin{pmatrix}
        \phantom{-}1 & 2 \\
        -2 & 2 
     \end{pmatrix}$, $B=\begin{pmatrix}
        1 \\
        2 
     \end{pmatrix}$, $C=\begin{pmatrix}
        -1  \\
        \phantom{-}2 
     \end{pmatrix}$ and $D=3$.

Then, factoring our associated matrix into Jordan form, we obtain

$$m_{\phi}=\begin{pmatrix}
        \phantom{-}1 & 2 & 1 \\
        -2 & 2 & 2 \\
        -1 & 2 & 3
     \end{pmatrix}=\begin{pmatrix}
        1 & 0 & -\frac{1}{4} \\
        0 & \frac{1}{2} & -\frac{1}{8} \\
        1 & 0 & \phantom{-}0
     \end{pmatrix} \begin{pmatrix}
        2 & 1 & 0 \\
        0 & 2 & 1 \\
        0 & 0 & 2
     \end{pmatrix}\begin{pmatrix}
        \phantom{-}0 & 0 & 1 \\
        -1 & 2 & 1\\
        -4 & 0 & 4
     \end{pmatrix}$$

\noindent so after standardizing we obtain $\lambda=\frac{1}{2}$ and we have

$$m_{\phi_t}=\begin{pmatrix}
        1 & 0 & -\frac{1}{4} \\
        0 & \frac{1}{2} & -\frac{1}{8} \\
        1 & 0 & \phantom{-}0
     \end{pmatrix} \begin{pmatrix}
        1 & \frac{t}{2} & \frac{t(t-1)}{8} \\
        0 & 1 & \frac{t}{2} \\
        0 & 0 & 1
     \end{pmatrix}\begin{pmatrix}
        \phantom{-}0 & 0 & 1 \\
        -1 & 2 & 1\\
        -4 & 0 & 4
     \end{pmatrix}=\begin{pmatrix}
       \frac{2-t^2}{2} & t & \frac{t^2}{2} \\
        -t & 1 & t\\
        -\frac{t^2}{2} & t & \frac{t^2+2}{2}
     \end{pmatrix}$$

\noindent which implies

$$\phi_t(z_1, z_2)=\left( \frac{(2-t^2)z_1+2tz_2+t^2}{-t^2z_1+2tz_2+t^2+2}, \frac{-2tz_1+2z_2+2t}{-t^2z_1+2tz_2+t^2+2} \right).$$

It is a straightforward calculation to see that $\phi_0=I$ and $\phi_1=\phi$.

\end{exm}

\section{Constructing One Parameter Semigroups for Analytic Maps}

Since all analytic self maps of the disk have a model linear fractional map, it is reasonable to suppose that analytic self maps of the unit ball in higher dimensions {\it that have} a linear fractional model as in Theorem \ref{data} share common behavior with analytic self maps of the disk. The question of when a generic self map of the unit ball $\mathbb{B}_N$ has a linear fractional model is still open. We thus make the presumption that our analytic self map of $\mathbb{B}_2$ has such a model. For $\phi$ with a linear fractional model, it is still true that we may write $\sigma \circ \phi_n= \Phi_n \circ \sigma$. We deviate from the case of linear fractional maps, however, since we can no longer make the presumption that $\sigma$ is invertible in order to write $\phi=\sigma^{-1} \circ \Phi \circ \sigma$. We still can, for sufficiently large $n$, construct a one parameter semigroup for analytic $\phi$. We are guaranteed this by noting that, according to the model, $\Phi$ is a linear fractional automorphism of $\Omega$, implying $\Phi^{-1}(\Omega)=\Omega$. Now $\sigma$ maps $\mathbb{B}_2$ into $\Omega$ which means that $\Phi^{-1}(\sigma (\mathbb{B}_2)) \subset \Phi^{-1}(\Omega)=\Omega$ which implies $\Phi^{-n}(\sigma (\mathbb{B}_2)) \subset \Omega$ for $n$ a positive integer. By the conditions of Theorem \ref{data}, we conclude that $\Omega=\cup_{n=1}^{\infty}\Phi^{-n}(\sigma(\mathbb{B}_2))$. We note that for any point $z$ in $\Omega$, there is a sufficiently large $n$ such that $\Phi_n(z) \in \sigma(\mathbb{B}_2)$ from which we can construct an inverse. Thus, for sufficiently large $n$, we have 

$$\phi_n=\sigma^{-1} \circ \Phi_n \circ \sigma$$

\noindent which allows us to construct our one parameter semigroup in the same manner as in the case of linear fractional maps. Since we know that our model maps in $\mathbb{C}^2$ stay in our appropriate space for fractional iterates, for sufficiently large $n$, we can define a one parameter semigroup for analytic self map of $\mathbb{B}_2$ that have such an intertwining.

Below we construct a one parameter semigroup for an analytic map corresponding to the case with multiplicity three and characteristic domain $\mathbb{H}^2$ by explicitly constructing $\sigma$ and $\Phi$. As we will see in our construction, this analytic self map of $\mathbb{B}_2$ won't even be a rational map.

\begin{exm}  \normalfont
Recall that $\mathbb{B}_2$ is biholomorphic to the Siegel half space $\mathbb{H}^2$ via the Cayley map $\Psi(z)=\frac{z+1}{1-z_1}$. We then have $\Psi^{-1}(z)=\left(\frac{z_1-1}{z_1+1}, \frac{2z_2}{z_1+1} \right)$ and we let $\sigma=\omega \circ \Psi$ be our intertwining map. We proceed to construct our analytic map $\phi$ by taking the ``long route'' around the below commutative diagram:

$$\begin{tikzcd}
\mathbb{B}_2 \arrow[r, "\phi"] \arrow[d, "\sigma"]
& \mathbb{B}_2 \arrow[d, "\sigma"] \\
\mathbb{H}^2  \arrow[r, "\Phi"]
& \mathbb{H}^2
\end{tikzcd}$$

We will choose $\omega (z) = (\sqrt{2 z_1}, \sqrt{z_2})$ where we take the principal branch of the square root. One can show $\omega: \mathbb{H}^2 \rightarrow \mathbb{H}^2$. Then $\omega^{-1}(z)=(\frac{1}{2}z_1^2, z_2^2)$ with

\begin{equation}
\sigma(z)=\left( \sqrt{\frac{2(z_1+1)}{-z_1+1}}, \sqrt{\frac{z_2}{-z_1+1}} \right)
   \quad\mathrm{and}\quad 
\sigma^{-1}(z)=\left( \frac{z_1^2-2}{z_1^2+2}, \frac{4z_2^2}{z_1^2+2} \right).
\end{equation}

Next, we want to choose $\Phi$ to correspond to the case with multiplicity three and characteristic domain $\mathbb{H}^2$. By \cite{crosby}, we know that for this case, $\phi$ is equivalent to a Heisenberg translation whose associated matrix has one Jordan block. Recall that a Heisenberg translation in $\mathbb{C}^2$ is a linear fractional map of the form $h_b(z)=Az+b$ where $A=\begin{pmatrix}
        1 & 2\overline{b_2} \\
        0 & 1 
     \end{pmatrix}$ and $b=(b_1, b_2)^T$. Thus we choose our map $\Phi$ to be the Heisenberg translation given by
$$\Phi(z)=\left( z_1+\frac{1}{2}z_2+\frac{1}{2} , z_2+\frac{1}{4} \right).$$

We then define $\phi = \sigma^{-1} \circ \Phi \circ \sigma$. A calculation shows $\phi(z)=\left( \phi_1(z), \phi_2(z) \right)$ where

$$\phi_1(z)=\frac{15z_1+z_2+1+4\sqrt{2z_2(z_1+1)}+4\sqrt{2(1-z_1^2)}+2\sqrt{z_2(1-z_1)}}{-z_1+z_2+17+4\sqrt{2z_2(z_1+1)}+4\sqrt{2(1-z_1^2)}+2\sqrt{z_2(1-z_1)}} $$

\noindent and

$$\phi_2(z)=\frac{16z_2-z_1+1+8\sqrt{z_2(1-z_1)}}{-z_1+z_2+17+4\sqrt{2z_2(z_1+1)}+4\sqrt{2(1-z_1^2)}+2\sqrt{z_2(1-z_1)}}.$$

To define a one-parameter semigroup, we note that 

$$m_{\Phi}=\begin{pmatrix}
        1 & \frac{1}{2} & \frac{1}{2} \\
        0 & 1 & -\frac{1}{4} \\
        0 & 0 & 1
     \end{pmatrix}=\begin{pmatrix}
        1 & 0 & \phantom{-}0\\
        0 & 2 & -8 \\
        0 & 0 & \phantom{-}8
     \end{pmatrix} \begin{pmatrix}
        1 & 1 & 0 \\
        0 & 1 & 1 \\
        0 & 0 & 1
     \end{pmatrix}\begin{pmatrix}
        1 & 0 & 0 \\
        0 & \frac{1}{2} & \frac{1}{2} \\
        0& 0 & \frac{1}{8} 
     \end{pmatrix}$$

\noindent and thus

$$m_{\Phi_t}=\begin{pmatrix}
        1 & 0 & \phantom{-}0\\
        0 & 2 & -8 \\
        0 & 0 & \phantom{-}8
     \end{pmatrix} \begin{pmatrix}
        1 & t & \frac{t(t-1)}{2} \\
        0 & 1 & t \\
        0 & 0 & 1
     \end{pmatrix}\begin{pmatrix}
        1 & 0 & 0 \\
        0 & \frac{1}{2} & \frac{1}{2} \\
        0& 0 & \frac{1}{8} 
     \end{pmatrix}=\begin{pmatrix}
       1	& \frac{t}{2} & \frac{t(t+7)}{16} \\
        0 & 1 & \frac{t}{4}\\
        0 & 0 & 1
     \end{pmatrix}$$

\noindent which gives 

$$\Phi_t(z)=\left(z_1+\frac{t}{2}z_2+\frac{t(t+7)}{16}, z_2+\frac{t}{4} \right).$$

Hence we define $\phi_t=\sigma^{-1} \circ \Phi_t \circ \sigma$ to be our one-parameter semigroup of $\phi$.  Define the following:

\begin{align*}
A:=&1024z_1+64t^2z_2+t^2(t+7)^2(1-z_1)+256t\sqrt{2z_2(z_1+1)}\\
&+32t(t+7)\sqrt{2(1-z_1^2)}+16t^2(t+7)\sqrt{z_2(1-z_1)}\\
B:=&1024+64t^2z_2+t^2(t+7)^2(1-z_1)+256t\sqrt{2z_2(z_1+1)}\\
&+32t(t+7)\sqrt{2(1-z_1^2)}+16t^2(t+7)\sqrt{z_2(1-z_1)}\\
C:=&64t^2(1-z_1)+1024z_2+512t\sqrt{z_2(1-z_1)}\\
D:=&1024+64t^2z_2+t^2(t+7)^2(1-z_1)+256t\sqrt{2z_2(z_1+1)}\\
&+32t(t+7)\sqrt{2(1-z_1^2)}+16t^2(t+7)\sqrt{z_2(1-z_1)}.
\end{align*}

A calculation shows $\phi_t(z)=\left( \phi_{1_t}(z), \phi_{2_t}(z) \right)$ where $\phi_{1_t}(z)$ and $\phi_{2_t}(z)$ are given by

$$\phi_{1_t}(z)=\frac{A}{B}$$

\noindent and 

$$\phi_{2_t}(z)=\frac{C}{D}$$

It is a straightforward calculation to see that $\phi_0=I$ and $\phi_1=\phi$ and we have thus constructed a one parameter semigroup for an analytic map of $\mathbb{B}_2$ into itself with a corresponding model linear fractional map.

\end{exm}

\section{Further Questions}

While we have realized an explicit construction of one parameter semigroups for linear fractional maps in $\mathbb{B}_2$ and for analytic self maps of $\mathbb{B}_2$ that have a linear fractional model, much remains to be done. The obstruction to generalizing these results to $\mathbb{C}_N$ lies in the fact that the classification of linear fractional maps in $\mathbb{B}_2$ according to \cite{crosby} is not yet generalized to $\mathbb{B}_N$. One suspects, however, that these results should generalize in an appropriate manner.

\begin{quest}
All linear fractional self maps of $\mathbb{B}_N$ for $N=1,2$ have a linear fractional model. Can this be generalized to $\mathbb{B}_N$ for all $N \in \mathbb{N}$?
\end{quest}

If this question can be answered in the affirmative, then the technique of the paper would allow an explicit construction of one parameter semigroups for all linear fractional self maps, not constant and not an automorphism, of the unit ball in any dimension.

For analytic self maps of the disk, we are guaranteed a linear fractional model. This is also true for linear fractional maps of $\mathbb{B}_2$ into itself. Under what conditions does this generalize?

\begin{quest}
What conditions must be imposed on an analytic self map $\phi$  of $\mathbb{B}_N$ for $\phi$ to be guaranteed a linear fractional model?
\end{quest}

As an exercise to the reader, it can be shown that for an $m \times m$ Jordan block $A$ and for $n \in \mathbb{N}$, the set $\{A^n\}$ can be extended to a one parameter semigroup. This allows the techniques of this paper to generalize for the case in which an analytic self map of $\mathbb{B}_N$ has such a linear fractional model.

\section*{REFERENCES}

\end{document}